\newcommand{\red}[1]{\begin{color}{black}#1\end{color}}
\newcommand{\qu}[1]{\tilde{#1}}
\newcommand{\dq}[1]{\hat{#1}}
\newcommand{\vdq}[1]{\hat{\mathbf{#1}}}
\newcommand{\dqm}[1]{\hat{#1}}
\newcommand{\dqset}{\hat{\mathbb Q}}
\newcommand{\udqset}{\hat{\mathbb U}}
\newcommand{\ii}{\mathbf{i}}
\newcommand{\jj}{\mathbf{j}}
\newcommand{\kk}{\mathbf{k}}
\newcommand{\vx}{\mathbf{x}}
\newcommand{\err}{\mathrm{err}}
\newtheorem{Thm}{Theorem}[section]
\newtheorem{Lem}{Lemma}
\newtheorem{Prop}[Thm]{Proposition}
\begin{document}
		\title{A Dual Quaternion Control Law for Formation Control of Multiple 3-D  Rigid Bodies} 
 \author{
 	Chunfeng Cui\footnote{School of Mathematical Sciences, Beihang University, Beijing  100191, China.
 		({\tt chunfengcui@buaa.edu.cn})}
 	\and 
 	Liqun Qi\footnote{
 		Department of Applied Mathematics, The Hong Kong Polytechnic University, Hung Hom, Kowloon, Hong Kong.
 		({\tt maqilq@polyu.edu.hk})}
 	\and Hao Chen \footnote{College of Mechatronics and Automation, National University of Defence Technology, Changsha, 410073, China.
 		({\tt  chenhao09@nudt.edu.cn})}
 			\and Xiangke Wang \footnote{College of Mechatronics and Automation, National University of Defence Technology, Changsha, 410073, China.
 			({\tt  xkwang@nudt.edu.cn})}
 }

 \date{\today}
 \maketitle
 
 \begin{abstract}
	\red{This paper studies the integrated position and attitude control problem for multi-agent systems of 3D rigid bodies. 
	While the state-of-the-art method in [Olfati-Saber and Murray, 2004] established the theoretical foundation for rigid-body formation control, it requires all agents to asymptotically converge to identical positions and attitudes, limiting its applicability in scenarios where distinct desired relative configurations must be maintained.} In this paper, we develop a novel dual-quaternion-based framework that generalizes this paradigm. By introducing a unit dual quaternion directed graph (UDQDG) representation, we derive a new control law through the corresponding Laplacian matrix, enabling simultaneous position and attitude coordination while naturally accommodating directed interaction topologies. Leveraging the recent advances in UDQDG  spectral theory, we prove global asymptotic convergence to desired relative configurations modulo a right-multiplicative constant and establish an $R$-linear convergence rate  determined by the second smallest eigenvalue of the UDQDG Laplacian.   	A projected iteration method is proposed to compute the  iterative states.  Finally, the proposed solution is verified  by several numerical  experiments.

 \end{abstract}

		\section{Introduction}
		
		\red{The simultaneous coordination of position and attitude  in multi-agent systems composed of 3D rigid bodies is important \cite{OPA15} and has been  widely applied in various domains, including autonomous mobile robots \cite{RB08}, unmanned aerial vehicles (UAVs) \cite{SKN11}, autonomous underwater vehicles (AUVs) \cite{YXL21}, and small satellite constellations \cite{CHWG24}. Traditional formation control methods   mostly treat each agent as a point-mass  and then employ consensus-based approaches to position them at desired locations within the formation. However, in multi-agent  cooperative missions such as search and rescue operations, to fully utilize payload capabilities, it is essential to simultaneously consider both the position and attitude of each agent (e.g., when multiple drones perform coordinated firefighting from different locations with varying orientations).   The inherent non-Euclidean geometry of the configuration space introduces unique complexities compared to point-mass agent models, necessitating geometric control theories and  novel formulations to guarantee convergence and stability.}

		For a rigid body in 3D space, the pose  is  characterized by its rotational and translational components. The translational component is naturally described by a vector $\bm{t}\in \mathbb R^3$. For rotation representation, multiple mathematical formulations exist, each with distinct advantages and limitations: Euler angles and axis-angle provide a minimal parametrization but suffer from  singularities;   rotation matrices in the special orthogonal group $SO(3)$ maintain orthogonality constraints; while unit quaternions $\mathbb{U}$ provide a computationally efficient and singularity-free representation.   	The complete rigid body transformation can be comprehensively represented using several  mathematical frameworks: the  Lie algebra $se(3)$  is storage efficient yet singular; the special Euclidean group $SE(3)$ combines $SO(3)$ and translation;  and unit dual quaternions $\udqset$ offer a compact,  singularity-free representation with advantageous algebraic properties.  
			Among these representations, dual quaternions exhibit particularly advantageous properties. Their scalar-like algebraic structure facilitates an elegant and mathematically  formulation of the dual quaternion Laplacian matrix, where each element is inherently a dual quaternion number. This stands in contrast to alternative representations, which are constrained to either vector or matrix forms.
			
			\begin{table}[t]
				\centering
				\footnotesize
				\renewcommand{\arraystretch}{1.3}
				\caption{Rotation and configuration representations in 3D multi-agent formation control.The abbreviations `Y' and `N'   represent `Yes' and `No', and `Cons' represent `Constraints', respectively.}
				\resizebox{\linewidth}{!}{
					\begin{tabular}{|c|c|c|c|c|c|c|}
						\Xhline{1.2pt}
						& & Definition  & Storage & Cons & Singular & Scalar\\
						\hline
						\multirow{4}{*}{Rotation}	&	Euler angles  &   $(\phi,\theta,\varphi)$            & 3      & N       &    Y    &    N      \\
						&	Axis-angle  &   $so(3):=\{\bm{v} \in \mathbb{R}^{3}\}$   & 3              &    N    & Y    &    N   \\
						&	Special orthogonal group  &   $SO(3):=\{R \in \mathbb{R}^{3 \times 3} \mid R^\top R = I_{3}, \operatorname{det}(R)=1\}$      & 9              &    Y    & N    &    N    \\
						&	Unit quaternion  &   $\mathbb{U} :=\{{q}\in \mathbb{Q}\mid|{q}|=1\}$             & 4         &    Y    & N  &   Y   \\ 
						\Xhline{1pt} 
						&		Lie algebra   &        $se(3):=\{(\bm{\rho},\bm{t})\mid \bm{\rho}\in so(3),\bm{t}\in\mathbb{R}^{3}\}$    &  6&          N          &  Y     &    N       \\
						Configuration	&	Special Euclidean group   &        $SE(3):=\{(R,\bm{t})\mid R\in SO(3),\bm{t}\in\mathbb{R}^{3}\}$    &12&          Y          &  N        &    N    \\ 
						&	Unit dual quaternion   &        $\udqset:=\{{q}_r+{q}_d\epsilon\in\hat{\mathbb{Q}}\mid{q}_r\in\mathbb{U}, \left\langle{q}_r,{q}_d\right\rangle=0\}$    &  8&          Y          &  N        &   Y    \\
						\Xhline{1.2pt}
				\end{tabular}}
				\label{table-pose-representation}
			\end{table}

			In the 3D formation control problem, the communication topology of networked agents plays a pivotal role in system coordination. This topological structure can be mathematically characterized by the graph Laplacian matrix – a fundamental tool for control law design that has proven particularly effective in solving consensus problems in multi-agent systems.  In their seminal work in 2004 and 2007, Olfati-Saber and Murray \cite{OM04}, complemented by   Olfati-Saber, Fax, and Murray \cite{OFM07}, pioneered a control framework specifically designed to solve consensus problems in multi-agent systems with switching network topologies and time-delay characteristics. Their approach can be formulated as follows:
		\begin{equation}\label{con:OM04}
			\dot \vx(t) = -L \vx(t),
		\end{equation}
		where $\vx(t)$ represents the state vector of the  nodes    and $L$ denotes the graph Laplacian matrix. Olfati-Saber and Murray \cite{OM04} showed that if \red{the underlying graph} $G$ is either a connected undirected graph or a strongly connected digraph, then the algorithm in \eqref{con:OM04}  guarantees asymptotic consensus for all initial states. In this context, consensus is achieved if and only if  $x_i=x_j$ for all $i\neq j$, indicating state synchronization across all network nodes. 
		Moreover, the study established a fundamental relationship between the network's algebraic connectivity and the control performance. 
		In 2005, Ren and Beard \cite{RB05}  investigated the consensus seeking in multi-agent systems under dynamically changing interaction topologies.

		For planar state spaces, the dynamics of a system   are constrained to a 2D plane, simplifying modeling and control. The \red{coordinate-free} consensus and formation problems have been effectively addressed using complex Laplacians and rotation matrices, as demonstrated in \cite{LWHF14} and \cite{LH15}. These approaches have proven particularly valuable in handling the geometric constraints inherent in planar configurations  
		\red{and highlighted the important role of the graph Laplacian in control law design.}

		When addressing the pose representation of 3-D rigid bodies, dual quaternions have gained widespread adoption owing to their significant advantages over alternative representations.  
		The application of dual quaternions in pose representation and control has been extensively explored in recent research. 
		In 2012, Wang et al.  \cite{WHYZ12,WYL12} pioneered the use of dual quaternions as a mathematical framework for pose representation and developed a novel control law capable of achieving network consensus.  In 2016, Gui and Vukovich \cite{GV16} introduced an innovative dual-quaternion-based adaptive control scheme for spacecraft motion tracking, demonstrating reduced control effort requirements. Recent advancements in this field have further expanded the application of dual quaternion-based approaches \cite{FA16,KFIA17,MK17,MMGB17,QWC25,WY17}.
		
		Especially, in 2020, Savino et al. \cite{SPSA20} made significant contributions by presenting an effective solution to the pose consensus problem for multiple mobile manipulators, i.e.,
		\begin{equation}\label{control:SPSA20}
			\dot{\vdq y} = -L \vdq y,
		\end{equation}
		where $\vdq y$ represents a dual quaternion vector, \red{$L$ is  the Laplacian matrix of the underlying graph.} 
		\red{The proposed pose consensus protocol guarantees convergence under the condition that the inter-agent interactions are characterized by directed graphs containing directed spanning trees \cite{SPSA20}. 
			However, the    matrix  $L$ involved in \eqref{control:SPSA20} is still the   Laplacian of the underlying graph. Consequently, all agents converge to identical configurations, and \eqref{control:SPSA20} is not applicable in scenarios where agents must reach distinct target states, such as 
			distributed tracking   and  consensus with heterogeneous states \cite{GHMHC22}.

			In all the approaches mentioned above, the elements of the Laplacian matrix are either real- or complex-valued quantities. However, for 3D rigid bodies, dual quaternions offer a distinct advantage—unlike vector or matrix representations, a dual quaternion is a scalar-like quantity, facilitating the formulation of a novel dual quaternion-based Laplacian matrix.
			
			Although the use of dual quaternion Laplacian matrices in formation control problems has not yet been widely investigated, recent advances in the spectral theory of dual quaternion matrices \cite{CLQW24,QCO24,QL23} have now established the theoretical foundation.   
			In 2023, Qi, Wang, and Luo \cite{QWL23}   proposed the dual quaternion Laplacian matrix  and   investigated  their pivotal role in formation control systems.    Subsequently, Chen et al. \cite{CHWG24} conducted an in-depth investigation into   precise formation flying problem in satellite clusters.}

		In this paper, we explore the formulation of control laws utilizing dual quaternion Laplacian matrices and establish the convergence theory through  the eigenvalues associated with the UDQDG Laplacian matrix. 
		Our main contributions are summarized as follows:
		\begin{itemize}
			\item[(i)] We present a novel dual-quaternion-based control law for 3D rigid-body formations using UDQDG Laplacian matrices, and prove the global asymptotic convergence and $R$-linear convergence rate.

			\item[(ii)] We introduce a projected iterative method to numerically compute agent states within the unit dual quaternion manifold at each iteration.  
			
			\item[(iii)] Through systematic numerical experiments, we confirm the  $R$-linear convergence rates that corroborate our theoretical findings.
		\end{itemize}
		
		The remainder of this paper is organized as follows.
		In next section, we provide the necessary preliminaries on dual quaternions and formation control.
		Section~\ref{sec:mainresults} presents our main results, including the proposed control law and its theoretical analysis. 
		\red{In Section~\ref{sec:Proj_method}, we develop a projected iterative method to compute agent states while preserving unit dual quaternion constraints.}   
		Numerical experiments are detailed in  Section~\ref{sec:numer}, demonstrating the efficiency of our approach. Finally, we conclude this paper in Section~\ref{sec:conclu}. 
		
		\section{Preliminary}

		\subsection{Directed graph}

		Consider the formation control problem for a system of $n$ rigid bodies, which can be modeled using a directed graph    $G=(V,E)$, where  $V$ represents the set of  $n$ nodes and
		$E$ denotes the set of $m$ directed arcs.
		For any two rigid bodies $i$ and $j$ in $V$, if rigid body $i$ can sense rigid body $j$, then  {arc} $(i, j) \in E$,  where $i$ and $j$ are the tail and head of the arc $(i, j)$, respectively. 
		Note that the existence of  $(i, j) \in E$  does not imply $(j , i) \not \in E$, as the graph is directed. 
		The Laplacian matrix of  $G$ is then defined as follows:
		\begin{equation}\label{Laplaican_mat_G}
			L =   {D- A},
		\end{equation}
		where  {$D$} is a diagonal real matrix with    $d_i$ representing  the number of  {arcs  {going} out from} $i$
		and ${A}=(a_{ij})$ is the adjacency matrix with  $a_{ij}=1$ if $(i,j)\in E$ and $a_{ij}=0$ otherwise.

		A directed spanning tree of a digraph $G$ is a subgraph, wherein one node, designated as the root, possesses no in-neighbors, and from this root, a path exists to every other node.
		The following result, taken from \cite{RB08}, proves indispensable in our analysis.

		\begin{Lem}[\cite{RB08}, Corollary 2.5] \label{Lem:zeroeig_G}
			The nonsymmetrical Laplacian matrix  of a directed graph has a simple zero eigenvalue with an associated eigenvector ${\bf 1}$  and all of the other eigenvalues are in the open right half plane if and only if the digraph has a directed spanning tree.
		\end{Lem}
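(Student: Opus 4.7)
The plan is to prove both implications of the equivalence using Gersgorin's disc theorem for the ``closed right half plane'' claim and a kernel/condensation analysis for the multiplicity of~$0$. I would first record two immediate observations. Since each row of $L=D-A$ sums to zero, $L\mathbf{1}=\mathbf{0}$, so $0$ is always an eigenvalue with right eigenvector~$\mathbf{1}$. Second, by Gersgorin's theorem, every eigenvalue of $L$ lies in some disc $\{z\in\mathbb{C}:|z-d_i|\le d_i\}$, and each such disc is contained in the closed right half plane and meets the imaginary axis only at the origin. In particular, any non-zero eigenvalue has strictly positive real part, so the problem reduces to characterising when $0$ is a \emph{simple} eigenvalue of~$L$.

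For the sufficiency direction (spanning tree $\Rightarrow$ simple $0$), I would take any $v\in\ker L$ and show that $v$ is a scalar multiple of~$\mathbf{1}$. The relation $d_iv_i=\sum_{(i,j)\in E}v_j$ expresses $v_i$ (at a node of positive out-degree) as a convex combination of the values at its out-neighbours, so the set $S=\{i:v_i=\max_k v_k\}$ is forward-invariant under the arcs of~$G$. Using the spanning-tree hypothesis to reach every node from the root, I would conclude that $S=V$, whence $v$ is constant. Geometric simplicity of $0$ then follows; algebraic simplicity is obtained by a rank argument on $L^2$, or equivalently a singular M-matrix argument on $-L$, ruling out any Jordan block of size $\ge 2$ at~$0$. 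For the necessity direction, I would argue the contrapositive: if no directed spanning tree exists, the condensation of $G$ into strongly connected components has at least two distinct ``root-type'' components, and by assigning independent constants to these and extending consistently through the acyclic condensation, one constructs two linearly independent vectors in $\ker L$, contradicting simplicity.

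The main obstacle is the sufficiency direction. The maximum-propagation argument is delicate because the spanning-tree root need not itself achieve the maximum of~$v$, and nodes of zero out-degree can truncate the forward closure of the argmax set. Resolving this requires a careful alignment of the graph's reachability from the root with the algebraic constraint $Lv=0$, together with a separate verification that algebraic and geometric multiplicities agree at the zero eigenvalue.
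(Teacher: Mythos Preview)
The paper does not give a proof of this lemma: it is quoted as Corollary~2.5 of Ren and Beard~\cite{RB08} and used only as an input to the proof of Theorem~\ref{Thm:controllaw}. There is thus no in-paper argument against which to compare your proposal.

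Your outline is essentially the classical proof found in the consensus literature. The Gersgorin localisation and the condensation/contrapositive argument for necessity are both standard and correct. For sufficiency you have put your finger on the genuine subtlety: forward-closure of the argmax set alone does not place the spanning-tree root inside it, and with the convention you wrote down (averaging over out-neighbours) the argument as stated can in fact fail on small examples. The usual resolution is to run the averaging argument with the Ren--Beard convention, under which $v_i$ is the mean of its \emph{in}-neighbours; then both the argmax and argmin sets are backward-closed, and walking the spanning-tree path from the root to any extremiser in reverse forces the root into both sets, giving $\max v=\min v$. This disposes of the zero-degree and ``root need not be a maximiser'' worries simultaneously and yields $\ker L=\operatorname{span}\{\mathbf 1\}$. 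Your remark that one still has to upgrade geometric to algebraic simplicity at~$0$ is also correct; the standard device is to test a putative generalised eigenvector against a nonnegative left null vector~$w$ of~$L$ (which satisfies $w^{\top}\mathbf 1>0$ under the spanning-tree hypothesis) to rule out $Lv\in\operatorname{span}\{\mathbf 1\}\setminus\{0\}$.
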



		\subsection{Dual quaternion}
		
		The concept of quaternions was pioneered by W. R. Hamilton in 1843. Let $\mathbb Q$  denote the ring of quaternions.  A quaternion, denoted as  $q=q_0+q_1\ii + q_2\jj +q_3\kk$, comprises three imaginary components, where the imaginary units satisfy the fundamental relations   $\ii^2=\jj^2=\kk^2=\ii\jj\kk=-1$. 
		The   conjugate of a quaternion $q$  is denoted by $q^*=q_0-q_1\ii - q_2\jj -q_3\kk$ 
		and the magnitude is given by $|q|=\sqrt{q_0^2+q_1^2+q_2^2+q_3^2}$. 
		For notational convenience, a quaternion can also be represented as a four-dimensional vector  $[q_0,q_1,q_2,q_3]$. 
		
		Dual numbers were first introduced by W. K. Clifford in 1873 \cite{Cl73}. Let $\hat{\mathbb Q}$  denote the ring of dual quaternions.  A dual quaternion $\hat q \in\dqset$ comprises two quaternions 
		$$\hat q = q_s + q_d\epsilon\in\dqset,$$
		where   $\epsilon$ is the infinitesimal unit, satisfying $\epsilon \not = 0$ but $\epsilon^2 = 0$,  $q_s$ and $q_d$ are referred to as the standard  and  dual part of $\hat q$, respectively. 
		\red{The   conjugate of a dual quaternion $\hat q$ is defined by $\hat q^*=q_s^*+q_d^*\epsilon$.} 
		A dual quaternion $\hat q = q_s + q_d\epsilon$
		is called a {unit dual quaternion} (UDQ) if $|\hat q| = 1$, i.e.,
		$|q_s|=1 \text{ and } q_s^*q_d + q_d^*q_s =0.$
		The set of unit dual quaternions is denoted by $\hat {\mathbb U}$.

		A UDQ  can represent the {movement} of a rigid body as
		\begin{equation} \label{e3}
			\hat q = q_s + {1 \over 2} p^wq_s\epsilon ,
		\end{equation}
		where {$q_s$} is a unit quaternion, representing the rotation of the rigid body, and $p^w$  
		represents the {translation} of the rigid body. 
		Here, the superscript $w$ relating to the world frame \cite{WYL12}.  
		We may also use {$\hat q$} to represent the {configuration} of the rigid body.  Then {$q_s$} represents the {attitude} of the rigid body, and $p^w$  {represents} the {position} of the rigid body.   For further details on quaternions and dual quaternions, please refer to \cite{QWL23,WYL12}.
		
		\red{For any $n$-dimensional dual quaternion vector $\vdq x=\vx_s+\vx_d\epsilon=(\dq x_i)\in \dqset^n$,  let  $\vx_s^*$ denote   its conjugate transpose, and  $\bar \vx$  denote the component-wise conjugation, respectively.  
			The 2-norm of $\vdq x$ is defined    as $\|\vdq x\|=	\|\vx_d\|\epsilon$ if $\vx_s= {\bf 0}_n$ and $\|\vdq x\|=\|\vx_s\|+\frac{\vx_s^*\vx_d+\vx_d^*\vx_s}{\|\vx_s\|}\epsilon$ otherwise \cite{QLY22}.  
			Furthermore, we   adopt the $2^R$ norm defined by 
			\begin{equation}\label{equ:2R-norm}
				\|\vdq x\|_{2^R} = \sqrt{\|\vdq x_s\|^2+\|\vdq x_d\|^2}
			\end{equation}   in the numerical experiments.} 
		
		\subsection{Unit dual quaternions and formation control}
		In the formation control  of $n$ rigid bodies, the relative configuration of rigid body are  represented by UDQs.
		Let $\hat q_{d_{ij}}$ denote the weight associated with  the arc $(i, j)$, which defines  a  unit dual quaternion directed graph (UDQDG)  $\Phi = (G, \udqset, \varphi)$ \cite{QCO24}, where $G = (V, E)$ is the underlying directed graph, and $\varphi(i, j) = \hat q_{d_{ij}}$ for any $(i, j) \in E$. 
		If $\hat q_{d_{ij}}=\hat q_{d_{ij}}^*$ for all $(i, j) \in E$, the UDQDG reduced to a dual quaternion unit gain graph (DQUGG). 
		The desired relative configuration scheme $\left\{ \hat q_{d_{ij}}\right\}$ is {reasonable} if and only if there is a desired formation $\vdq q_d\in {\hat{\mathbb U}^n}$ that satisfies
		\begin{equation} \label{desrel}
			\hat q_{d_{ij}} = \hat q_{d_i}^*\hat q_{d_j},\ \forall (i, j) \in E, 
		\end{equation}
		as established in \cite{QCO24}.
		This condition is intrinsically linked to the balance   of DQUGGs  \cite{CLQW24}. 	Specifically,  by exploring the eigenvalue problem of dual Hermitian matrices, and its link with DQUGG, Qi and Cui \cite{QC24} proposed a cross-disciplinary approach to verify the  reasonable of the relative configuration problem.

		The Laplacian matrix  of a UDQDG   $\Phi$ is  defined by
		\begin{equation}\label{Laplaican_mat}
			\dqm L =   {D-\dqm A},
		\end{equation}
		where  {$D$} represents  the diagonal   matrix 
		and ${\dqm A}=(\dq a_{ij})$ is the adjacency matrix defined by 
		\begin{equation*}
			\dq a_{ij} = \left\{
			\begin{array}{cl}
				{\hat q_{d_{ij}}},  &  \text{if } (i,j)\in E,\\
				0, & \text{otherwise}.
			\end{array}
			\right.
		\end{equation*}
		Recently,    the spectral properties of  DQUGGs \cite{CLQW24,QC24} and UDQDGs \cite{QCO24} are extensively studied.  
		Especially, Qi, Cui, and Ouyang \cite{QCO24} showed	if a desired relative configuration scheme is reasonable, then there is a diagonal dual quaternion  matrix $\dqm Q$ such that
		\begin{equation}\label{L=QLQ}
			\dqm L = \dqm Q^*L \dqm Q,
		\end{equation}
		where $L$ is the Laplacian matrix of the underlying graph $G$.

		\section{Convergence Analysis}\label{sec:mainresults}
		\subsection{A control law}
		\red{ 
			Consider a multi-agent system of $n$ 3D rigid bodies, where the inter-agent kinematics and interactions are modeled by a   unit dual quaternion directed graph (UDQDG)  $\Phi = (G, \udqset, \varphi)$. 
			To achieve the simultaneous coordination of position and attitude,}  we propose the following control law
		\begin{equation} \label{control_law}
			\dot{\vdq z}=-\dqm K\dqm L\vdq z,
		\end{equation}
		where $\dqm L$ denote  the corresponding Laplacian matrix defined by  \eqref{Laplaican_mat}. 
		Or equivalently, 
		\begin{equation}\label{equ:control_law}
			\dot{\hat z}_i= \dq k_i\left(\sum_{j\in \mathcal N_i} {\dq q_{d_{ij}}} \dq z_j-d_i\dq z_i \right)
		\end{equation}
		for any $i=1,\dots,n$. \red{Here,  $\dqm K$ is a precondition matrix, and $\mathcal N_i=\{j|(i,j)\in E\}$.

			The proposed control law \eqref{control_law}, while structurally analogous to classical consensus approaches \cite{OM04,OFM07, SPSA20}, undergoes a fundamental transformation since dual quaternions exhibit non-commutative multiplication, a property inherited from their quaternion components. 
			In fact, \eqref{control_law} generalizes several state-of-the-art methods. Specifically:
			\begin{itemize}
				\item[(i)] If  all desired relative configurations are set to the identity dual quaternion, i.e., $\dq q_{d_{ij}}\equiv\dq 1$, 
				then the dual quaternion Laplacian matrix $\dqm L$ reduces to the standard Laplacian matrix of the underlying graph $G$. Moreover, when  
				the initial configuration is  real-valued and  $\dqm K=I$,   \eqref{control_law} simplifies to \eqref{con:OM04} proposed by \cite{OM04}.
				
				\item[(ii)]  If $\dqm K=I$ and all desired relative configuration are  identical,	i.e.,  $\dq q_{d_1}=\dots=\dq q_{d_n}$ and $\dq q_{d_{ij}}=1$ {for all $(i,j)\in E$}, then the dual quaternion Laplacian matrix $\dqm L$ reduces to the standard Laplacian matrix of the underlying graph $G$. In this case, 
				\eqref{control_law} reduces to   \eqref{control:SPSA20} proposed by  \cite{SPSA20}.
			\end{itemize}
			
		}

		The following propositions are important in our proof. 
		\begin{Prop}\label{Prop:A1}
			Let $\vdq x=\vx_s+\vx_d\epsilon \in \dqset^n$, $\dq Q=Q_s+Q_d\epsilon\in \dqset^{n\times n}$, and $V\in\mathbb R^{n\times n}$. Then the following results hold. 
			\begin{itemize}
				\item[(i)] If $\dq Q^*\dq Q = I_n$, i.e., $Q_s^*Q_s=I_n$ and $Q_s^*Q_d+Q_d^*Q_s=O_n$, then  $\|\dq Q\vdq x\| = \|\vdq x\|.$ 
				
				\item[(ii)] $\|V\vdq x\| \le (1+\delta)\sigma_{\max}(V) \|\vdq x\|$ for arbitrary small positive constant $\delta$.  
			\end{itemize} 
		\end{Prop}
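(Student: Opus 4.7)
For part (i), the plan is direct algebraic verification from the dual $2$-norm definition. First I would expand $\dq Q \vdq x = Q_s\vx_s + (Q_s\vx_d + Q_d\vx_s)\epsilon$ and handle the two branches of the norm definition in turn. When $\vx_s = {\bf 0}_n$, the standard part vanishes and the claim reduces to $\|Q_s \vx_d\| = \|\vx_d\|$, which is immediate from $Q_s^*Q_s = I_n$. When $\vx_s \neq {\bf 0}_n$, the same isometry handles the standard component $\|Q_s\vx_s\| = \|\vx_s\|$, and for the dual coefficient I would expand
\[
(Q_s\vx_s)^*(Q_s\vx_d + Q_d\vx_s) + (Q_s\vx_d + Q_d\vx_s)^*(Q_s\vx_s) = \vx_s^*\vx_d + \vx_d^*\vx_s + \vx_s^*(Q_s^*Q_d + Q_d^*Q_s)\vx_s,
\]
and kill the last term via the orthogonality hypothesis $Q_s^*Q_d + Q_d^*Q_s = O_n$. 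Dividing by $\|Q_s\vx_s\| = \|\vx_s\|$ then matches the dual coefficient of $\|\vdq x\|$ exactly.

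For part (ii), I would exploit the lexicographic nature of the dual-number order: $a + b\epsilon \le c + d\epsilon$ holds as soon as $a < c$ in the standard parts, irrespective of $b$ and $d$. This is precisely what the $(1+\delta)$ slack buys. Writing $V\vdq x = V\vx_s + V\vx_d\epsilon$, I would split on whether $\vx_s$ vanishes. If $\vx_s = {\bf 0}_n$, both sides have zero standard part and the claim collapses to the classical $\|V\vx_d\| \le \sigma_{\max}(V)\|\vx_d\|$. If $\vx_s \neq {\bf 0}_n$, the right-hand side has strictly positive standard part $(1+\delta)\sigma_{\max}(V)\|\vx_s\| > \sigma_{\max}(V)\|\vx_s\| \ge \|V\vx_s\|$, so the standard parts are strictly ordered and the dual-number inequality follows without any comparison of dual coefficients. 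The same argument also covers the degenerate sub-case $V\vx_s = {\bf 0}_n$ with $\vx_s \neq {\bf 0}_n$, where the left-hand side simply has standard part zero.

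The main obstacle throughout is bookkeeping rather than depth: in part (i) the expansion must respect the non-commutativity of quaternion multiplication and the identity $(AB)^* = B^*A^*$, while in part (ii) the norm formula branches on whether the standard part is zero, so each case must be instantiated with the correct branch. The $(1+\delta)$ factor is not a technical loss but exactly the mechanism that allows a uniform argument to cover the edge case in which $\vx_s$ is a right singular vector of $V$ attaining $\sigma_{\max}(V)$, where a sharper comparison of dual coefficients would otherwise be required.
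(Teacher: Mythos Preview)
Your proposal is correct and follows essentially the same route as the paper: direct expansion of $\dq Q\vdq x$ with a case split on $\vx_s$ for part~(i), and in part~(ii) the same case split together with the lexicographic order on dual numbers so that the strict inequality $(1+\delta)\sigma_{\max}(V)\|\vx_s\| > \|V\vx_s\|$ in the standard parts settles the comparison without examining the dual coefficients. If anything, you are slightly more careful than the paper in flagging the degenerate sub-case $V\vx_s = {\bf 0}_n$ with $\vx_s \neq {\bf 0}_n$, which the paper's displayed computation (dividing by $\|V\vx_s\|$) glosses over.
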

		\begin{proof}
			(i)  If $\vx_s\neq {\bf 0}_n$, then the product  $Q_s\vx_s$ is guaranteed to be non-zero  and 
			\begin{eqnarray*}
				&&\|\dq Q\vdq x\| \\ 
				&=&  \|Q_s\vx_s\|+\frac{(Q_s\vx_s)^*(Q_s\vx_d+Q_d\vx_s)}{\|Q_s\vx_s\|}\epsilon +\frac{(Q_s\vx_d+Q_d\vx_s)^*(Q_s\vx_s)}{\|Q_s\vx_s\|}\epsilon\\
				&=& \|\vx_s\| + \frac{\vx_s^*\vx_d+\vx_d^*\vx_s}{\|\vx_s\|}\epsilon  +\frac{\vx_s^*(Q_s^*Q_d+Q_d^*Q_s)\vx_s}{\|\vx_s\|}\epsilon\\
				&=& \|\vx_s\| + \frac{\vx_s^*\vx_d+\vx_d^*\vx_s}{\|\vx_s\|}\epsilon \\
				&=& \|\vdq x\|.
			\end{eqnarray*}
			Otherwise, if the standard part vanishes, i.e.,  $\vx_s= {\bf 0}_n$, then we may derive that   $\|\dq Q\vdq x\| = \|Q_s\vx_d+Q_d\vx_s\|\epsilon=\|Q_s\vx_d\|\epsilon=\|\vx_d\|\epsilon=\|\vdq x\|$. 
			
			(ii) Let $V\in\mathbb R^{n\times n}$. When $\vx_s= {\bf 0}_n$,   we may obtain the inequality  $\|V\vdq x\|  = \|V\vx_d\|\epsilon \le \sigma_{\max}(V)\|\vx_d\|\epsilon,$ where $\sigma_{\max}(V)$ is the maximum singular value of $V$. For the case  $\vx_s\neq {\bf 0}_n$,   we may  derive 
			\begin{eqnarray*}
				\|V\vdq x\|
				&=&  \|V\vx_s\|+\frac{\vx_s^*V^*V\vx_d+\vx_d^*V^*V\vx_s}{\|V\vx_s\|}\epsilon\\
				&\le & \sigma_{\max}(V)\|\vx_s\|+\frac{\vx_s^*V^*V\vx_d+\vx_d^*V^*V\vx_s}{\|V\vx_s\|}\epsilon\\
				&<& (1+\delta)\sigma_{\max}(V) \|\vdq x\|,
			\end{eqnarray*}
			where the last inequality is derived from the ordering principle of dual numbers \cite{QLY22}. Specifically, for any dual numbers    $a=a_s+a_d\epsilon$ and $b=b_s+b_d\epsilon$, the relation  $a<b$ holds if either  $a_s<b_s$ or $a_s=b_s$ and  $a_d<b_d$.

			This completes the proof. 
		\end{proof}

		\red{
			Next, we establish an upper bound for the matrix norm associated with the Jordan block. Notably, this result does not require the Laplacian matrix itself to be a Jordan block, but rather emerges from our intermediate  analysis.
		}
		\begin{Prop}\label{Prop:A2}
			Let $\lambda=\lambda_r+\ii \lambda_i$ be a given complex number, and  $J_{\lambda,n}$ be an $n$-dimensional Jordan matrix, i.e.,  $J_{\lambda,n} = \lambda I_n + I_{n}^+$ and   $I_{n}^+$ denotes a nilpotent matrix with 1's on the superdiagonal and 0's elsewhere.   
			Then for any $t\ge 0$, 	we have  $$\sigma_{\max}(\exp(-J_{\lambda,n} t))\le \red{\left(n-1+\frac{n}{(n-1)!}t^{n-1}\right)} \exp( -\lambda_r t).$$  
		\end{Prop}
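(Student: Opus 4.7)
The plan is to factor out the scalar exponential and then bound the norm of the nilpotent residue. First, since $\lambda I_n$ and $I_n^+$ commute, we have $\exp(-J_{\lambda,n}t) = e^{-\lambda t}\exp(-t I_n^+)$. Taking spectral norms and using $|e^{-\lambda t}| = e^{-\lambda_r t}$ reduces the problem to proving the $\lambda$-free bound $\sigma_{\max}(\exp(-t I_n^+)) \le n-1 + \frac{n t^{n-1}}{(n-1)!}$.

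Next, nilpotency $(I_n^+)^n=0$ lets us truncate the series to $\exp(-t I_n^+) = \sum_{k=0}^{n-1}\frac{(-t)^k}{k!}(I_n^+)^k$. Each summand is a scaled shift whose spectral norm equals $\frac{t^k}{k!}$, since $(I_n^+)^k$ is a rank-$(n-k)$ shift of operator norm one for $0\le k\le n-1$. The triangle inequality then gives the preliminary bound $\sigma_{\max}(\exp(-t I_n^+)) \le \sum_{k=0}^{n-1} \frac{t^k}{k!}$.

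The hard part is passing from this preliminary bound to the stated form $(n-1)+\frac{n t^{n-1}}{(n-1)!}$, because the truncated-exponential sum $\sum_{k=0}^{n-1} t^k/k!$ is not always below $n-1+\frac{n t^{n-1}}{(n-1)!}$ (it fails, for instance, around $n=6$, $t\approx 2.65$). I would therefore sharpen the above step by separating the top-degree term $\frac{(-t)^{n-1}}{(n-1)!}(I_n^+)^{n-1}$, of operator norm exactly $\frac{t^{n-1}}{(n-1)!}$, from the lower-order tail and bounding the tail by $n-1$ through a weighted Young/AM--GM decomposition that spreads each intermediate $\frac{t^k}{k!}$ into a constant share (the $n-1$ such shares summing to $n-1$) plus an extra share of $\frac{t^{n-1}}{(n-1)!}$ (providing the additional coefficient needed to upgrade $\frac{1}{(n-1)!}$ to $\frac{n}{(n-1)!}$). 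Alternatively, one can invoke the Frobenius bound $\sigma_{\max}^2 \le \|\exp(-t I_n^+)\|_F^2 = \sum_{k=0}^{n-1}(n-k)\frac{t^{2k}}{(k!)^2}$ and show this is at most $\bigl(n-1+\frac{n t^{n-1}}{(n-1)!}\bigr)^2$ for $n\ge 3$ via term-by-term Young inequalities, with the corner cases $n=1,2$ verified directly from the explicit $2\times 2$ computation.
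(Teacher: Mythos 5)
Your factorization $\exp(-J_{\lambda,n}t)=e^{-\lambda t}\exp(-tI_n^+)$, the truncation of the exponential series, and the triangle-inequality bound $\sigma_{\max}(\exp(-tI_n^+))\le\sum_{k=0}^{n-1}\frac{t^k}{k!}$ coincide with the paper's opening steps, and your observation that this truncated sum is \emph{not} always dominated by $n-1+\frac{n}{(n-1)!}t^{n-1}$ is correct: for $n=6$, $t=2.5$ the sum is about $11.67$ while the target is about $9.88$. This is a genuine difficulty, since the paper's own argument passes through exactly this scalar comparison, so the obstacle you flag is not an artifact of your write-up.

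However, neither of your two proposed repairs closes the gap. The first is self-defeating: separating the top-degree term and splitting each intermediate $\frac{t^k}{k!}$ into a constant share plus a multiple of $\frac{t^{n-1}}{(n-1)!}$ still produces, after summation, the bound $\sum_{k=0}^{n-1}\frac{t^k}{k!}\le (n-1)+\frac{n}{(n-1)!}t^{n-1}$ --- precisely the inequality you have just refuted, so no choice of Young/AM--GM weights can succeed; the loss already occurs at the triangle-inequality step, and any fix must exploit cancellation or structure of $\exp(-tI_n^+)$ rather than re-bound the same scalar sum. The second route fails as well, because the Frobenius norm is too crude: for $n=6$, $t=2.5$ one has $\|\exp(-tI_6^+)\|_F^2=\sum_{k=0}^{5}(6-k)\frac{t^{2k}}{(k!)^2}\approx 102.6$, whereas $\left(5+\frac{6t^{5}}{120}\right)^2\approx 97.7$, so the inequality you intend to establish ``for $n\ge 3$ via term-by-term Young inequalities'' is false and the Frobenius bound cannot certify the stated constant (this does not show the proposition itself fails there, since $\|\cdot\|_F$ overestimates $\sigma_{\max}$). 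As it stands, your argument proves the bound only with the weaker constant $\sum_{k=0}^{n-1}\frac{t^k}{k!}$ in place of $n-1+\frac{n}{(n-1)!}t^{n-1}$; the passage to the stated constant remains unproved.
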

		\begin{proof} Through direct computation, we obtain the matrix exponential factorization $\exp(-J_{\lambda,n} t) =\exp(-\lambda  t) \exp(-I_n^+ t)$. For an arbitrary real-valued vector $\vx\in \mathbb R^n$, we may derive that 
			\begin{eqnarray*} 
				\left\| \exp(-I_n^+ t) \vx \right\| = \left\|\sum_{k=0}^{n-1}\frac{(-I_n^+ t)^k}{k!}\vx\right\| \le \sum_{k=0}^{n-1}\frac{t^k}{k!}\|\vx\|.
			\end{eqnarray*}
			Consequently,  we have $\sigma_{\max} (\exp(-I_n^+ t))\le \sum_{k=0}^{n-1}\frac{t^k}{k!}.$ 
			We now analyze the inequality by considering two distinct temporal regimes  of $t$.

			\red{If $t\in [0,n-1)$,   we can derive that  $\frac{t^k}{k!}$ is monotonically decreasing from $k=0$ to $\lfloor t\rfloor$ and then monotonically increasing from $k=\lceil t\rceil$  to $n-1$.  Therefore, the following inequality holds  
				\begin{eqnarray*}
					\sum_{k=0}^{n-1}\frac{t^k}{k!}&\le&  1+\lfloor t\rfloor + (n-\lceil t\rceil)\frac{t^{n-1}}{(n-1)!} \le n-1+ \frac{n}{(n-1)!}t^{n-1}. 
				\end{eqnarray*}  
				Similarly, if $t\ge n-1$, we may derive that   $\sum_{k=0}^{n-1}\frac{t^k}{k!}\le  \frac{n}{(n-1)!}t^{n-1}$.

				Consequently, it holds that}  
			\begin{eqnarray*}
				\sigma_{\max}(\exp(-J_{\lambda,n} t))  
				&=&\exp( -\lambda_r t) \sigma_{\max} (\exp(-I_n^+ t))  \\ 
				&\le&  \left(n-1+\frac{n}{(n-1)!}t^{n-1}\right) \exp( -\lambda_r t). 
			\end{eqnarray*}
			This completes the proof. 
		\end{proof}

		In the following, we show the proposed control law \eqref{control_law} guarantees asymptotic convergence of the multi-agent system to the desired formation. The proof employs the Jordan canonical form to analyze the system dynamics.

		\begin{Thm}\label{Thm:controllaw}
			Suppose the   directed  graph $G = (V, E)$ contains a directed spanning tree and   there exists  $\vdq q_d\in{\hat{\mathbb U}^n}$  such that  (\ref{desrel}) holds for all $\left\{ \hat q_{d_{ij}} : (i, j) \in E \right\}$. 
			Let   $\dqm K= \dqm Q^*K \dqm Q$, where  $\dqm Q=\mathrm{diag}(\vdq q_d)$ and $K\in\mathbb R^{n\times n}$ is a positive diagonal matrix. Then  the following results hold. 
			\begin{itemize}
				\item[(i)]  The observed configuration vector $\vdq z = \vdq z(t)$   under control law \eqref{control_law}
				globally  and asymptotically  converges to
				$\vdq z^{\infty}=\bar{\vdq q}_d\dq c$, where  $\dq c\in {\hat {\mathbb Q}}$.
				\item[(ii)] Furthermore,  if   the initial configuration satisfies $\vdq z(0)\in\udqset^n$, then  $\|\vdq z(t)-\vdq z^{\infty}\|$ converges R-linearly to  zero with the rate  $\exp(-\lambda_{2r})$,  where  $\lambda_{2r}$ denotes the real part of the second smallest eigenvalue of $KL$.
			\end{itemize}
		\end{Thm}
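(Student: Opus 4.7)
The plan is to reduce the dual quaternion dynamics in \eqref{control_law} to a real-matrix linear ODE, so that the classical spectral arguments apply. The key is the factorization $\dqm L=\dqm Q^*L\dqm Q$ from \eqref{L=QLQ} together with the prescribed form $\dqm K=\dqm Q^*K\dqm Q$ of the preconditioner. Since $\vdq q_d\in\udqset^n$, the diagonal matrix $\dqm Q=\mathrm{diag}(\vdq q_d)$ is ``unitary'' in the sense that $\dqm Q^*\dqm Q=\dqm Q\dqm Q^*=I_n$. Introducing $\vdq w(t):=\dqm Q\vdq z(t)$, \eqref{control_law} becomes $\dot{\vdq w}=-KL\,\vdq w$, whose solution is $\vdq w(t)=e^{-KLt}\vdq w(0)$. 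Because the evolution matrix is now real, the standard and dual parts of $\vdq w$ decouple and each obeys the same real linear ODE.

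Next I describe the spectrum of $KL$. Because $K$ is positive diagonal, $(KL)_{ij}=k_{ii}L_{ij}$ preserves the sign pattern and the zero row sums of $L$, so $KL$ is itself the Laplacian of a weighted directed graph on the same arc set as $G$. Lemma \ref{Lem:zeroeig_G} then yields that $0$ is a simple eigenvalue of $KL$ with right eigenvector $\mathbf 1$, and every other eigenvalue lies in the open right half plane. I write $KL=T(0\oplus J')T^{-1}$ in Jordan form, arranged so that the first column of $T$ is $\mathbf 1$ and the first row $\vec r$ of $T^{-1}$ is the associated left eigenvector; then $e^{-KLt}=T(1\oplus e^{-J't})T^{-1}$.

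For part (i), set $\dq c:=\vec r\,\vdq w(0)\in\dqset$. Then $T^{-1}(\vdq w(0)-\mathbf 1\dq c)$ has vanishing first component, so $\vdq w(t)-\mathbf 1\dq c=T(0\oplus e^{-J't})T^{-1}(\vdq w(0)-\mathbf 1\dq c)$ tends componentwise to $\mathbf 0$ as $t\to\infty$, since every eigenvalue of $J'$ has strictly positive real part. Inverting the change of variables gives $\vdq z(t)\to\dqm Q^*\mathbf 1\dq c=\bar{\vdq q}_d\dq c$, which is the claimed limit.

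For part (ii), Proposition \ref{Prop:A1}(i) yields $\|\vdq z(t)-\vdq z^\infty\|=\|\vdq w(t)-\vdq w^\infty\|$. Applying Proposition \ref{Prop:A1}(ii) successively to the real matrices $T$ and $T^{-1}$ sandwiching the middle factor $e^{-J't}$, whose operator norm is bounded blockwise by Proposition \ref{Prop:A2}, produces an estimate of the form $\|\vdq w(t)-\vdq w^\infty\|\le C\,(1+t^{m-1})\exp(-\lambda_{2r}t)$, where $m$ is the size of the largest Jordan block whose eigenvalue attains the real part $\lambda_{2r}$ and $C$ absorbs condition numbers of $T$. The polynomial prefactor is subexponential, so $\limsup_{t\to\infty}t^{-1}\log\|\vdq z(t)-\vdq z^\infty\|\le-\lambda_{2r}$, i.e., the R-linear rate is $\exp(-\lambda_{2r})$. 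The assumption $\vdq z(0)\in\udqset^n$ is used to keep the error a bona fide dual quaternion vector with well-defined 2-norm along the trajectory. The main technical nuisance is that this norm switches formulas depending on whether the standard part of the error vanishes, so the estimate must be shown to descend cleanly to both the standard and dual components, and the arbitrarily small $\delta$ arising in Proposition \ref{Prop:A1}(ii) must be absorbed without altering the exponential rate.
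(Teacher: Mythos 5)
Your proposal is correct and follows essentially the same route as the paper: it uses the unitarity of $\dqm Q$ together with $\dqm K\dqm L=\dqm Q^*KL\dqm Q$ to reduce to the real system $\dot{\vdq w}=-KL\vdq w$, invokes Lemma~\ref{Lem:zeroeig_G} and the Jordan form of $KL$ with the zero mode carried by $\mathbf 1$, and then bounds the error via Propositions~\ref{Prop:A1} and~\ref{Prop:A2}. Your explicit identification $\dq c=\vec r\,\vdq w(0)$ via the left null eigenvector coincides with the paper's $\dq y_1(0)$, so the two arguments are the same in substance.
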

		\begin{proof}  
			Through direct computation and and \eqref{L=QLQ}, we obtain  
			\[\dqm K \dqm L =( \dqm Q^* K \dqm Q)( \dqm Q^* L \dqm Q)= \dqm Q^* KL \dqm Q.\]
			According to  Lemma~\ref{Lem:zeroeig_G},    all eigenvalues of $L$  have positive real parts  except for a single zero eigenvalue.	
			Since $K\in\mathbb R^{n\times n}$ is a positive diagonal matrix, the scaled matrix  $KL$ maintains this eigenvalue structure.  
			Denote  the Jordan decomposition by $KL=SJS^{-1}$. Without loss of generalization, we express the Jordan matrix  as  $J=\mathrm{diag}(0,J_2)$, where $J_2$  represents a Jordan matrix whose diagonal entries all have positive real parts. 
			Denote ${\bf s}_1$ a the first column of $S$.
			From the fundamental property that    $\mathbf 1_n$ spans the null space of  $L$, we conclude that ${\bf s}_1$  must be collinear with   $\mathbf 1_n$.
			Let $V=\alpha S$, where the scaling factor   $\alpha=\frac{1}{{\bf 1}^\top {\bf s}_1}$ such that the first column of $V$ is exactly  ${\bf s}_1$.    Consequently, we obtain 
			\begin{equation}\label{KL_transform}
				V^{-1}KLV=\begin{bmatrix}
					0 & 0\\
					0 & J_2
				\end{bmatrix}.
			\end{equation} 
			Define
			\[{\vdq y}=\begin{bmatrix}
				\dq y_1\\
				{\vdq y_2}
			\end{bmatrix}=V^{-1}\dqm Q \vdq z.\]
			Then, the dynamics in \eqref{control_law} can be expressed as
			\begin{equation}
				\begin{bmatrix}
					\dot {\dq y}_1\\
					\dot{{\vdq y}}_2
				\end{bmatrix}= -\begin{bmatrix}
					0 & 0\\
					0 & J_2
				\end{bmatrix} \begin{bmatrix}
					\dq y_1\\
					{\vdq y_2}
				\end{bmatrix}.
			\end{equation}
			From this, we have $\dq y_1(t)=\dq y_1(0)$ remains constant, while ${\vdq y_2(t)=\exp(-J_2 t) {\vdq y_2(0)}}$ asymptotically  converges to a  zero vector as $t\rightarrow \infty$. Due to $V\vdq y (t)\rightarrow {\bf 1}_n \dq y_1(0)$ and $\dqm Q^*{\bf 1}_n=\bar{\vdq q}_d$, we derive that $\vdq z(t)={\dqm Q^*V\vdq y (t)}$ globally asymptotically converges to a formation $\bar{\vdq q}_d\dq y_1(0)$.

			Denote $\vdq z^{\infty}$ and $\vdq y^{\infty}$ as the limit state of $\vdq z(t)$ and $\vdq y(t)$, respectively. Under the assumption that the initial state satisfies   $\vdq z(0)\in\udqset^n$, we have 
			\begin{eqnarray*}
				\|\vdq z(t)-\vdq z^{\infty}\|  
				&=& \|\dqm QV\left( \vdq y(t)-\vdq y^{\infty}\right)\| \\
				&=& \|V\left( \vdq y(t)-\vdq y^{\infty}\right)\| \\
				&\le& (1+\delta)\sigma_{\max}(V)\| \vdq y(t)-\vdq y^{\infty}\| \\
				&=& (1+\delta)\sigma_{\max}(V)\| \vdq y_2(t)\| \\
				&\le& (1+\delta)^2\sigma_{\max}(V) \sigma_{\max}(\exp(-J_2 t))\| \vdq y_2(0)\| \\
				&\le&(1+\delta)^2 \sigma_{\max}(V) \red{c_{n_0}}\exp( -\lambda_{2r}t)\| \vdq y(0)\| \\ 
				&\le& (1+\delta)^3\kappa(V)\red{c_{n_0}}\exp( -\lambda_{2r}t)\| \vdq z(0)\|\\
				&=&  \sqrt{n}(1+\delta)^3\kappa(V)\red{c_{n_0}}\exp( -\lambda_{2r}t). 
			\end{eqnarray*}
			\red{Here, $c_{n_0} = \left(n_0-1+\frac{n_0}{(n_0-1)!}t^{n_0-1}\right)$, $n_0$ is the largest size of the Jordan block of $J_2$,} $\delta>0$ represents a small positive constant,   	$\kappa(V)$ corresponds to the condition number of  $V$, and $\lambda_{2r}$ characterizes the real part of the eigenvalue of  $KL$ with the second smallest real part.  The first, second, and fourth inequalities are established by Proposition~\ref{Prop:A1}, and the third inequality is a consequence of Proposition~\ref{Prop:A2} along with the fact that the spectral norm of a block diagonal matrix does not exceed the largest spectral norm among its individual blocks.   
			
			This implies that $\|\vdq z(t)-\vdq z^{\infty}\|$ exhibits R-linear convergence to zero at an asymptotic convergence rate of   $\exp( -\lambda_{2r})$, which completes the proof. 
		\end{proof}
		
		\red{Theorem~\ref{Thm:controllaw}  readily applies when the underlying graph $G$ is a connected undirected graph.
			In this case, the dual quaternion  Laplacian matrix $\dqm L$ becomes Hermitian and diagonalizable \cite{QL23}, with  algebraic multiplicity $n_0=1$ and $c_{n_0}=1$.}

		\section{A Projected Iteration Approach}\label{sec:Proj_method}
		\red{The control law in \eqref{control_law} inherently requires $\vdq z(t) \in \udqset^n$ for all $t \geq 0$ to maintain valid rigid-body configurations.   Hence, the algorithmic implementation requires innovative techniques, which represent significant departures from conventional vector-space consensus methods.  Specifically,} given an initial  configuration  $\vdq z(0)\in\udqset^n$, directly implementing the discrete version of \eqref{control_law} may result in an infeasible configuration. To address this issue, we  adopt a projected iteration approach in this paper, which ensures that the iterates remain within the feasible domain.
		
		Specifically,    for all iteration index $k=0,1,\dots$,  we update  $\vdq z(k)$ as follows,
		\begin{equation}\label{equ:proj_iterate}
			\vdq z(k+1) =  \mathrm{proj}_{\hat{\mathbb U}^n}\left(\vdq z(k) - \alpha_k\dqm K\dqm L\vdq z(k)\right),
		\end{equation}
		where $\alpha_k$ is the predefined stepsize, and   $\mathrm{proj}_{\hat{\mathbb U}^n}(\cdot)$ denotes the elementwise projection to the set of unit dual quaternions  \cite{CQ24}, i.e.,  
		for $\dq x=x_s+x_d\epsilon$, it holds that 
		\begin{align}\label{def:proj}
			\mathrm{proj}_{\hat{\mathbb U}}(\dq x) =\left\{
			\begin{array}{ll}
				\frac{x_s}{|x_s|} + \left(\frac{x_d}{|x_s|}-\frac{(x_s^*x_d+x_d^*x_s)x_s}{2|x_s|^3}\right)\epsilon,	& \text{ if }x_s\neq 0, \\
				\frac{x_d}{|x_d|}\epsilon,	& \text{ if }x_s=0,  x_d\neq 0,\\
				1,	& \text{ if }x_s=x_d=0.
			\end{array}
			\right. 
		\end{align} 
		\red{We stop the iterative process when $\|\vdq z(k+1)-\vdq z(k)\|_{2^R}$ is less than a predefined tolerance. Here, the $2^R$ norm is defined by \eqref{equ:2R-norm}.
			We summarize the numerical algorithm in Algorithm~\ref{Alg:controllaw}.  
			
			\begin{algorithm}[h]
				\caption{The projection iteration method}\label{Alg:controllaw}
				\begin{algorithmic}[1]
					\Require  a directed and connected graph $G = (V, E)$,   a
					desired relative configuration scheme $\left\{ \hat q_{d_{ij}} : (i, j) \in E \right\}$, stepsize $\alpha_k$, the precondition matrix $\dq K$, the tolerance $\delta$, and the maximum iteration $k_{\max}$. 
					\State Construct the Laplacian matrix by \eqref{Laplaican_mat}. 
					\For{$k=0,1,\dots,k_{\max}$}
					\State Compute $\vdq z(k+1)$ by \eqref{equ:proj_iterate}.
					\If{$\|\vdq z(k+1)-\vdq z(k)\|_{2^R}\le \delta$}
					\State Break.
					\EndIf
					\EndFor
					
					\State \textbf{Output:} $\vdq z(k+1)$. 
				\end{algorithmic}
			\end{algorithm}

			\section{Numerical Experiments}\label{sec:numer}
			In this section,  we perform   numerical simulations across three directed graph topologies: cycles, stars, and grids, to validate the effectiveness of the proposed control law presented in Algorithm~\ref{Alg:controllaw}. 
			For a directed cycle with $n$ vertices, the edge set is defined as: $E=\{(1,2),(2,3),\dots,(n-1,n),(n,1)\}$.  
			A star topology with $n=2n_0$ vertices has edges:  $E=\{(1,2),(2,3),\dots,(n_0-1,n_0),(n_0,1)\}\cup \{(i,n_0+i), (i,n_0+i+1)\}$ for all $i=1,\dots,n_0$ with $2n_0+1=1$. This structure combines a central cycle with outward branches. 
			For a grid with  $n = n_0^2$ vertices, the edge set is  $E=\{(jn_0+i,jn_0+i+1)\}\cup  \{((i-1)n0+j,in_0+j)\}$ for all $i=1,\dots,n_0-1, j =1,\dots,n_0$,  capturing both horizontal and vertical directed connections in a lattice. 
			For comparative analysis, we also generate their undirected counterparts by adding reverse edges with conjugate configuration weights.

			Given a formation of $n$ agents with arbitrary  initial configurations, the control objective is to achieve asymptotic convergence to the desired relative pose, specified by both rotational and translational states.  The desired configuration   $\vdq q_d\in{\hat{\mathbb U}^n}$ is generated as follows. 
			For any $i=1,\dots,n$, we begin by defining the angle as $\theta_i = \frac{i-1}{n}2\pi$ and the positions $\qu t_i$   as the locations corresponding to the graph topology, respectively. Let $\qu t_i = [0,\mathbf t_i]$ represent  the vector quaternion and let the axis be defined by $\mathbf v = [\cos(\theta_i),\sin(\theta_i),0]$.  The attitude  quaternion is then given by  $\qu q_{s_i} = \left[\cos\left(\frac{\theta_i}{2}\right),\sin\left(\frac{\theta_i}{2}\right){\mathbf v}\right]$ and the  desired  configuration of the $i$-th quadrotor is expressed as $\vdq q_{d_i} = \qu q_{s_i}+\frac12 \qu t_i\qu q_{s_i}\red{\epsilon},$ 
			respectively. 	 We consider two measurement scenarios in our analysis: noise-free measurements and noisy measurements. In the ideal noise-free case, the relative dual quaternion configuration between agents $i$ and $j$ is given by 
			\[\hat q_{d_{ij}} = \hat q_{d_i}^*\hat q_{d_j}, \quad \forall (i,j)\in E.\]
			For the more practical noisy measurement case, we model the relative configuration as
			\[\hat q_{d_{ij}} = \hat q_{d_i}^*\hat q_{d_j} \hat p_{ij}, \quad \forall (i,j)\in E,\]
			where $\hat p_{ij}$ is a multiplicative noise term represented by a unit dual quaternion close to the identity. This formulation accounts for measurement uncertainties while maintaining the unit dual quaternion constraint essential for rigid body configurations.

			We plot the desired configuration of  directed  cycles, stars, and grids in Fig.~\ref{fig:5p_cycle}.  
			
			\begin{figure}[h]
				\centering
				\begin{tabular}{ccc}
					\includegraphics[width=0.3\linewidth]{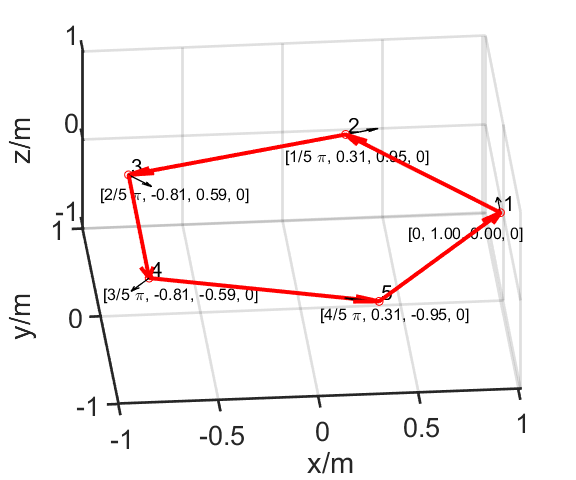} &
					\includegraphics[width=0.3\linewidth]{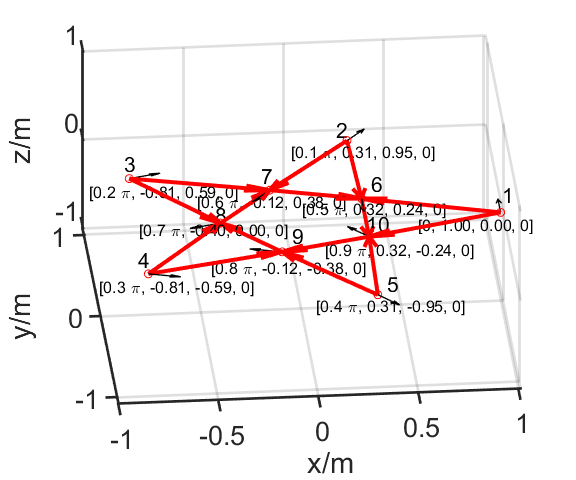} &
					\includegraphics[width=0.3\linewidth]{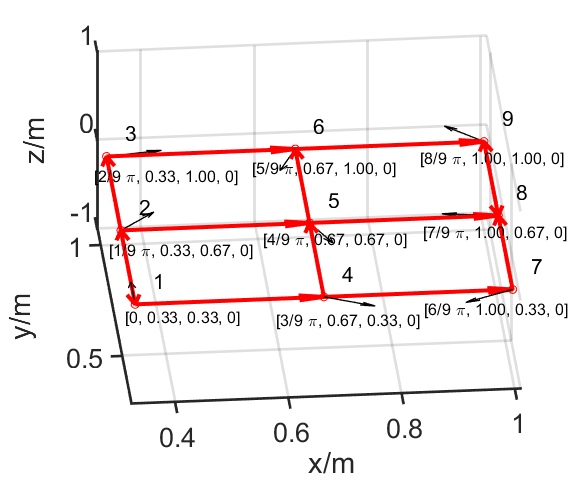} 
					\\
					(a)  Directed cycle ($n=5$) & (b)  Directed star  ($n=10$)  & (c) Directed grid  ($n=9$) 
				\end{tabular} 
				\caption{The directed  cycles, stars, and grids with  corresponding desired formation.}\label{fig:5p_cycle}
			\end{figure}
			
			\subsection{Noise-free measurements} 
			In this subsection, we examine the desired relative configurations that satisfy the geometric constraints specified in equation \eqref{desrel} under noise-free measurement conditions.}  
		By \eqref{L=QLQ}, the Laplacian matrix of the directed cycle corresponding to  Fig.~\ref{fig:5p_cycle}~(a) is presented as follows, 
		\[
		\dq L = \begin{bmatrix}
			1 & -\hat q_{d_1}^*\hat q_{d_2} & 0 & 0 & 0\\ 
			0 & 1 &  -\hat q_{d_2}^*\hat q_{d_3} & 0 & 0 \\
			0 & 0 & 1 &  -\hat q_{d_3}^*\hat q_{d_4} & 0 \\
			0 & 0 & 0 & 1 &  -\hat q_{d_4}^*\hat q_{d_5}\\
			-\hat q_{d_5}^*\hat q_{d_1}  	& 0 & 0 &0  & 1 \\
		\end{bmatrix}.
		\]
		Similarly, we may construct the Laplacian matrices of directed stars and grids.

		In Algorithm~\ref{Alg:controllaw}, we employ the parameter values  $\alpha_k=0.2$ and $ \dqm K=I$ for   
		all iterations. 
		\red{We set the residue tolerence as $\delta=10^{-15}\sqrt{n}$ unless explicitly noted otherwise.} 
		Denoting the final configuration after convergence as $\vdq z(k^1)$, we compute the transformation value $\dq c = ({\dq q}^*_{d1})^{-1}\dq z_1(k^1)$. Then  the desired asymptotic configuration is defined by 
		$\vdq z^{\infty}=\bar{\vdq q}_d\dq c$. 
		To quantify convergence, we evaluate the configuration error at each time slot $t$ using 
		\begin{equation}\label{err: config}
			\err(t) = \|\vdq z^{\infty}-\vdq z(t)\|_{2^R},
		\end{equation}
		where $\|\vdq q\|_{2^R}=\sqrt{\|{\bf q}_s\|^2+\|{\bf q}_d\|^2}.$

		\begin{table*}
			\centering
			
			\caption{Error evolution defined in \eqref{err: config} for undirected cycles (``cycle'') and directed  cycles (``d-cycle''). We consider errors below $10^{-16}$ to be negligible, represented as zero.} \label{tab:controllaw_cycle}
			
			\begin{tabular}{c|c| ccc|ccc  }
				\hline
				
				& &  \multicolumn{3}{c|}{cycle} &  \multicolumn{3}{c}{d-cycle}\\ \hline
				$n$	& &  $t = 30$ &   $t = 50$ &  $t = 70$ &  $t = 30$ &   $t = 50$ &  $t = 70$  \\ \hline
				\multirow{2}{*}{5}& $\err(t)$ &  5.85e$-$16 & 4.80e$-$16 & 4.80e$-$16 & 4.05e$-$07 & 1.59e$-$12 & 1.44e$-$15 \\ 
				&	$\exp(-\lambda_{2r}t)$ &	0 & 0 & 0 & 9.94e$-$10 & 9.90e$-$16 & 0 \\  \hline
				\multirow{2}{*}{7}& $\err(t)$ & 	2.35e$-$10 & 1.26e$-$15 & 1.26e$-$15 & 8.72e$-$04 & 1.15e$-$06 & 2.24e$-$09 \\ 
				&	$\exp(-\lambda_{2r}t)$ &	1.55e$-$10 & 0 & 0 & 1.24e$-$05 & 6.67e$-$09 & 3.58e$-$12 \\ \hline
				\multirow{2}{*}{9}& $\err(t)$ & 1.27e$-$05 & 6.86e$-$10 & 3.70e$-$14 & 7.15e$-$02 & 1.42e$-$03 & 2.87e$-$05 \\ 
				&	$\exp(-\lambda_{2r}t)$ &	8.01e$-$07 & 6.91e$-$11 & 5.96e$-$15 & 8.95e$-$04 & 8.31e$-$06 & 7.72e$-$08 \\  \hline
			\end{tabular}
		\end{table*}

		We consider three convergence scenarios by setting  $k_{\max}=150,250,350,$ corresponding to termination time    $t=30,50,70$, respectively, with a fixed  step size    $\alpha_k=0.2$. 
		\red{For comprehensive performance evaluation, we operate Algorithm~\ref{Alg:controllaw} without its default termination criterion, allowing observation of full convergence dynamics.}

		The results for agent populations $n=5,7,9$ under both directed and undirected cycle topologies are presented in Table~\ref{tab:controllaw_cycle}. 
		The results affirm   that the iterative sequence generated by our algorithm successfully converges to the desired configuration. Notably, the evolution of $\err(t)$ closely follows the exponential decay pattern described by $\exp(-\lambda_{2r}t)$, except when $\exp(-\lambda_{2r}t)$ falls below $10^{-16}$ while $\err(t)$ remains above this threshold,  primarily attributable to numerical error propagation. This verifies our   convergence rate analysis in Theorem~\ref{Thm:controllaw}. Additionally, the  real values of the  second smallest eigenvalues $\lambda_{2r}$ associated with Laplacian matrices of directed cycles are significantly smaller compared to their undirected counterparts, resulting in a reduced convergence rate. 
		The above conclusion is further confirmed by the stars and  grids, as demonstrated  in  Tables~\ref{tab:controllaw_star} and \ref{tab:controllaw_grid}.  
		
		\begin{table*}
			\centering
			
			\caption{Error evolution defined in \eqref{err: config} for undirected stars (``star'') and directed  stars (``d-star''). We consider errors below $10^{-16}$ to be negligible, represented as zero.} \label{tab:controllaw_star}
			
			\begin{tabular}{c|c| ccc|ccc  }
				\hline
				
				& &  \multicolumn{3}{c|}{star} &  \multicolumn{3}{c}{d-star}\\ \hline
				$n$	& &  $t = 30$ &   $t = 50$ &  $t = 70$ &  $t = 30$ &   $t = 50$ &  $t = 70$  \\ \hline
				\multirow{2}{*}{10}& $\err(t)$ &5.20e$-$13 & 7.31e$-$16 & 8.04e$-$16 & 5.22e$-$08 & 1.81e$-$13 & 1.03e$-$15 \\		 
				&	$\exp(-\lambda_{2r}t)$ & 7.29e$-$13 & 0 & 0& 9.94e$-$10 & 9.90e$-$16 & 0 
				\\ \hline
				
				\multirow{2}{*}{16}& $\err(t)$ & 4.43e$-$05 & 6.55e$-$09 & 9.69e$-$13 & 2.13e$-$02 & 1.65e$-$04 & 1.12e$-$06 \\ 
				&	$\exp(-\lambda_{2r}t)$ &3.17e$-$06 & 6.84e$-$10 & 1.48e$-$13 & 1.53e$-$04 & 4.36e$-$07 & 1.25e$-$09 \\ \hline
				\multirow{2}{*}{20}& $\err(t)$ &
				3.38e$-$03 & 1.08e$-$05 & 3.43e$-$08 & 1.89e$-$01 & 7.14e$-$03 & 2.85e$-$04 \\ 
				&	$\exp(-\lambda_{2r}t)$ & 2.30e$-$04 & 8.61e$-$07 & 3.23e$-$09 & 3.25e$-$03 & 7.13e$-$05 & 1.56e$-$06 \\  \hline
			\end{tabular}
		\end{table*}

		\begin{table*}
			\centering
			
			\caption{Error evolution defined in \eqref{err: config} for undirected grids (``grid'') and directed  grids (``d-grid'').  We consider errors below $10^{-16}$ to be negligible, represented as zero.} \label{tab:controllaw_grid}
			
			\begin{tabular}{c|c| ccc|ccc  }
				\hline
				
				& &  \multicolumn{3}{c|}{grid} &  \multicolumn{3}{c}{d-grid}\\ \hline
				$n$	& &  $t = 10$ &   $t = 30$ &  $t = 50$ &  $t = 10$ &   $t = 30$ &  $t = 50$  \\ \hline
				\multirow{2}{*}{16}& $\err(t)$ & 7.12e+00 & 1.21e$-$01 & 2.12e$-$03 & 3.95e+00 & 1.66e$-$07 & 1.12e$-$14\\ 
				&	$\exp(-\lambda_{2r}t)$ & 2.86e$-$03 & 2.33e$-$08 & 1.90e$-$13 & 4.54e$-$05 & 9.36e$-$14 & 0 \\ \hline 
				
				\multirow{2}{*}{49}& $\err(t)$ & 2.46e+00 & 4.42e$-$02 & 7.76e$-$04 & 6.24e+00 & 5.48e$-$07 & 9.57e$-$15 \\		 
				&	$\exp(-\lambda_{2r}t)$ & 1.38e$-$01 & 2.63e$-$03 & 5.00e$-$05 & 4.54e$-$05 & 9.36e$-$14 & 0  
				\\ \hline
				
				\multirow{2}{*}{100}& $\err(t)$ &
				1.42e+01 & 1.93e+00 & 2.65e$-$01 & 7.43e+00 & 5.34e$-$06 & 6.94e$-$14 \\ 
				&	$\exp(-\lambda_{2r}t)$ & 3.76e$-$01 & 5.30e$-$02 & 7.49e$-$03 & 4.54e$-$05 & 9.36e$-$14 & 0  \\  \hline
			\end{tabular}
		\end{table*}

		Furthermore, we present the iterative process   in   Fig.~\ref{fig:controllaw_cycle}, which    validates Algorithm~\ref{Alg:controllaw}'s $R$-linear convergence rate. \red{The iterative process of $\exp(-\lambda_{2r}t)$ is plotted as a theoretical reference ('Theory'). The exact agreement between this theoretical prediction and Algorithm~\ref{Alg:controllaw}'s convergence trajectory provides strong numerical evidence for the R-linear convergence rate proved in Theorem~\ref{Thm:controllaw}.}

		\begin{figure}[h!]
			\centering
			\begin{tabular}{ccc}
				\includegraphics[width=0.33\linewidth]{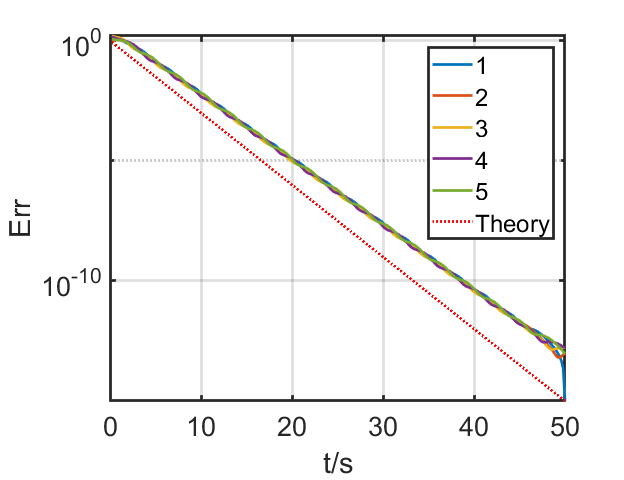} &
				\includegraphics[width=0.33\linewidth]{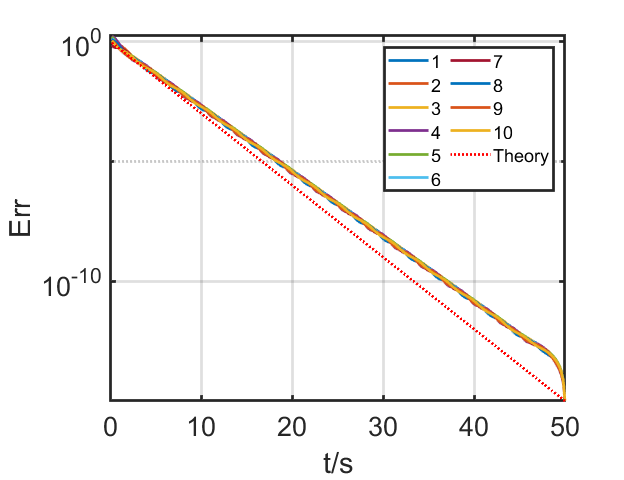} &	\includegraphics[width=0.33\linewidth]{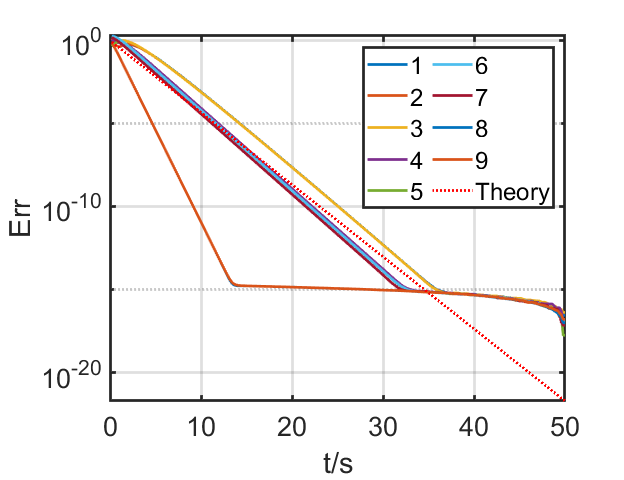} 
				\\
				(a)  Directed cycle ($n=5$) & (b)  Directed star  ($n=10$)  & (c) Directed grid  ($n=9$) 
			\end{tabular}
			\caption{Error evolution defined in \eqref{err: config} for  directed   cycle, star, and grid, respectively.}\label{fig:controllaw_cycle}
		\end{figure}

		At last,  Fig.~\ref{fig:5p_cycle_traj} illustrates the trajectories of the five quadrotors for the directed cycle, where their initial and terminal positions are respectively marked by blue circles and star symbols. Visual inspection clearly demonstrates that all quadrotors successfully converge to their predefined target configurations, as specified in  Fig.~\ref{fig:5p_cycle}(a). 
		
		\begin{figure}[!h]
			\begin{center}
				\includegraphics[width=0.8\linewidth]{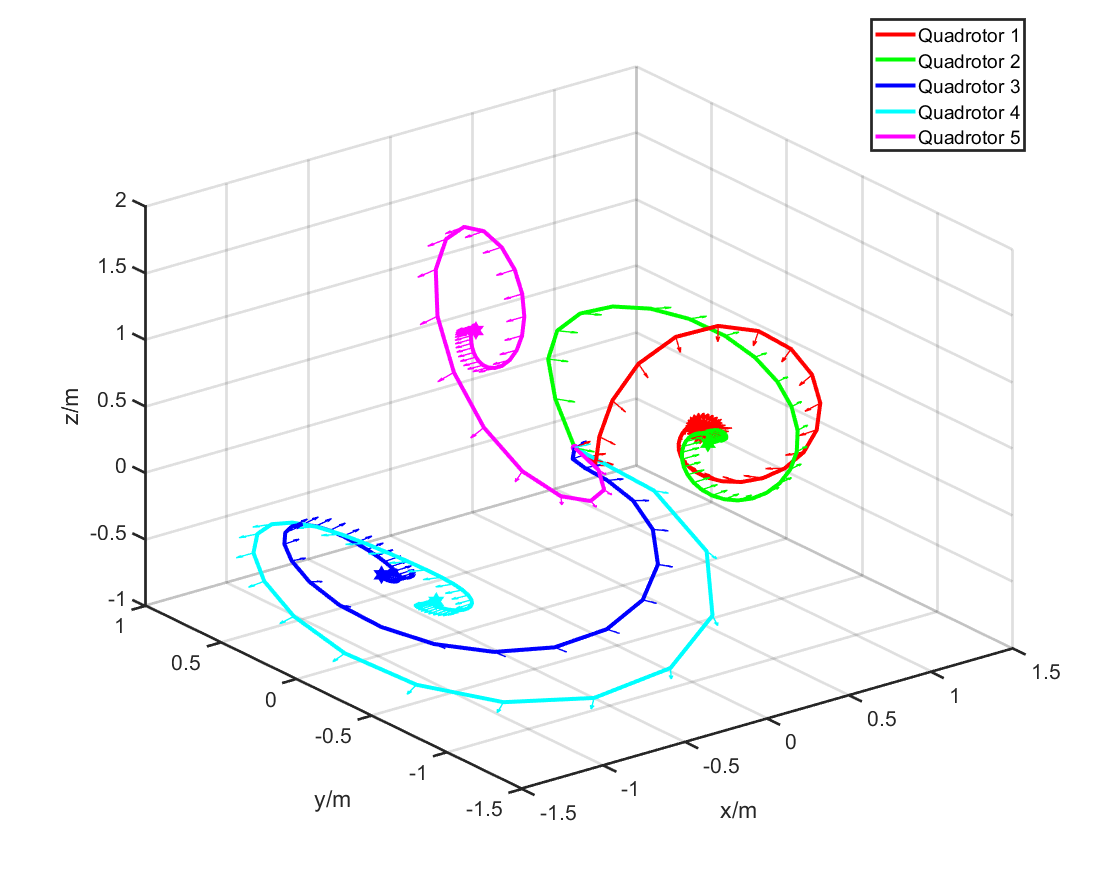}
			\end{center}
			\caption{The 3-D trajectories of five quadrotors. }\label{fig:5p_cycle_traj}
		\end{figure}

		\red{
			\subsection{With noise measurements}
			While Theorem~\ref{Thm:controllaw} assumes the desired relative configurations $\left\{ \hat q_{d_{ij}}: (i, j) \in E \right\}$ to be reasonable, this assumption may not always hold in practical scenarios. 
			The presence of sensor noise gives rise to  non-negligible disturbance terms in the system model. 
			The gain graph approach \cite{QC24} provides   a priori validation method for the configuration scheme's feasibility. 
			
			Suppose the prescribed relative configuration scheme  $\hat q_{d_{ij}}$ is infeasible, i.e.,  no set of absolute configurations $\hat q_{d_{i}}$ exists that satisfies the  conditions $\hat q_{d_{ij}} = \hat q_{d_i}^*\hat q_{d_j}$ for all $(i,j)\in E$.  In this case, we seek to compute the nearest feasible  configuration that minimizes the inconsistency in these relational constraints. 
			By \cite{QC24},  when the relative configuration is reasonable, it holds that  $\dq L\vdq x=0$ when $\vdq x=\bar{\vdq q}_d$. Consequently, we construct the following minimization  problem  
			\begin{equation}\label{equ:Lx=0}
				\min_{\vdq x} \| \dq L\vdq x\|_{2^R}^2 \text{\quad s.t. \quad} \vdq x\in\udqset^n.
			\end{equation}

			We follow  a two-stage computational process to compute \eqref{equ:Lx=0}.  First, we construct the dual quaternion Laplacian matrix and compute $\min_{\vdq x} \| \dq L\vdq x\|^2$ to obtain the intermediate solution. 
			This is followed by a projection stage to map each component    onto the unit dual quaternion manifold. Namely, 
			\begin{equation}
				\dq q_i={\rm Proj}_{\udqset} \left(\hat x_i\right),\ \  \forall \ i=1,\dots,n. 
			\end{equation}
			Here, $\mathrm{proj}_{\hat{\mathbb U}^n}(\cdot)$ denotes the elementwise projection to the set of unit dual quaternions  defined by \eqref{def:proj}. 
			
			Unlike in real vector spaces, the solution space of \eqref{equ:Lx=0} over dual quaternion rings exhibits non-uniqueness even when the nullspace is one-dimensional and $\vdq x\in\udqset^n$. Specifically, for any solution $\vdq x \in \udqset^{n}$ satisfying \eqref{equ:Lx=0} and any nonzero dual quaternion $\dq p\in\udqset$, the  vector $\vdq x \dq p\in\udqset^n$ remains a valid solution.  
			Therefore, we fix $\dq x_1=\dq 1$ and solve the reduced system.
			
			Denote $\dq L_{1}\in\dqset^{n}$ and $\dq L_{2}\in\dqset^{n\times (n-1)}$ as the first and  remaining columns of $\dq L$, respectively, and $\vdq x_{2}\in\dqset^{n-1}$ as the last $n-1$ elements of $\vdq x$.  
			We begin by fixing $x_{1s}=1$ and subsequently solving the resulting reduced system for the standard component as follows: 
			\begin{equation} \label{L_s_reduced}
				\min_{\vx_{2s}\in \mathbb{Q}^{n-1}}\quad  \|L_{2s} \vx_{2s} +L_{1s}\|^2. 
			\end{equation}
			Then we fix $x_{1d}=0$ and solve the resulting reduced system for the dual part as follows:  
			\begin{equation} \label{L_d_reduced}
				\min_{\vx_{2d}\in \mathbb{Q}^{n-1}}\quad 	\|L_{2s} \vx_{2d} + L_d\vx_s\|^2. 
			\end{equation}
			We denote $\vdq x=\vx_s+\vx_d\epsilon$    the nearest feasible  configuration that minimizes the inconsistency in these relational constraints.

			Finally, we present the results for the control law under noisy measurements and nearest desired relative configurations. 
			We evaluate the control law's performance with noisy measurements by generating perturbed relative configurations $\hat q_{d_{ij}} = \hat q_{d_i}^*\hat q_{d_j} \hat p_{ij}$ for all edges $(i,j)\in E$  as follows.  The measurement noise is modeled through two distinct components: Each angular parameter $\theta^k_{ij}$ is independently drawn from a uniform distribution over the interval  $[0,\sigma\pi]$, and the displacement vector $t_{ij}$ follows a zero-mean Gaussian distribution with variance $\sigma^2$. Then we obtain the standard parts  
			\[p_{ijs} = \cos(\theta^1_{ij}) + \sin(\theta^1_{ij})\cos(\theta^2_{ij})\ii+\sin(\theta^1_{ij})\sin(\theta^2_{ij})\cos(\theta^3_{ij})\jj+\sin(\theta^1_{ij})\sin(\theta^2_{ij})\sin(\theta^3_{ij})\kk,\] 
			and  the dual parts $p_{ijs}=\frac12 t_{ij}p_{ijs}$, respectively.

			We consider a directed grid network with 9 vertices under measurement noise with standard  deviation $\sigma=0.02$. Following the configuration approximation method described above,  we obtain estimated relative configurations with a residual norm of $0.0015$, demonstrating close agreement with the noise magnitude $\sigma$. The convergence behavior is illustrated in Fig.~\ref{fig:controllaw_grid_noise}, which reveals two distinct convergence regimes: The raw noisy measurements exhibit slow, sublinear convergence due to persistent measurement errors, while the nearest valid configurations achieve $R$-linear convergence, confirming the theoretical stability guarantees.  
			
			\begin{figure}[h!]
				\centering
				\begin{tabular}{cc} 
					\includegraphics[width=0.473\linewidth]{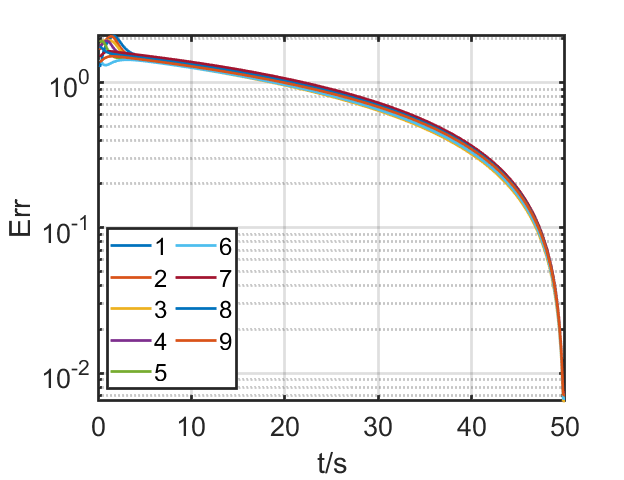} & 
					\includegraphics[width=0.473\linewidth]{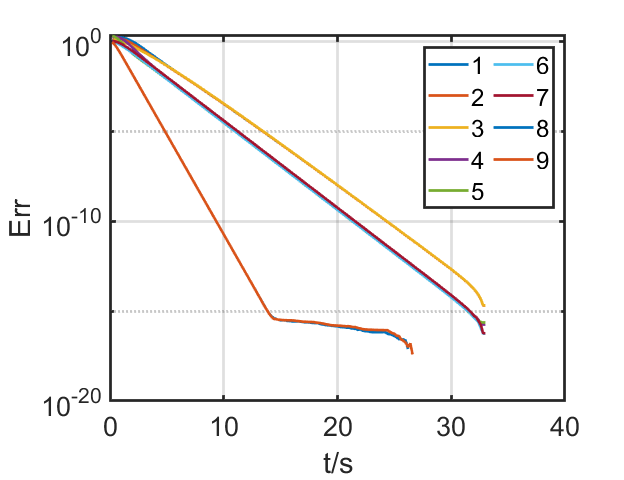} \\
					(a)  With noise measurements  & (b) Nearest desired relative configurations
				\end{tabular}
				\caption{Iterative process of error defined in \eqref{err: config} for  directed   grid  under noise measurements with $\sigma=0.02$ and nearest desired relative configurations,  respectively.}\label{fig:controllaw_grid_noise}
			\end{figure}
		}
		\section{Conclusions}\label{sec:conclu}
		In this paper, we introduced a control law that leverages the Laplacian matrix of   UDQDG and showed its R-linear convergence. This control mechanism can be seen as a directed generalization of the work presented in \cite{OM04}, adapted to the dual quaternion ring. Our proposed control law enables the simultaneous and coordinated control of both position and attitude for a group of 3D rigid bodies. It holds potential for addressing   formation control challenges   in    autonomous mobile robots, UAVs, AUVs,  and small satellites.
		
		Our primary focus was on directed networks with a fixed topology. However, the extension   to directed networks with switching topology and communication time-delays remains an area for future research.

		%
		

\end{document}